\newcommand {\R} {\mathbb{R}}
\newcommand {\Z} {\mathbb{Z}}
\newcommand {\FF} {\mathcal{F}}
\newcommand {\Fix} {\text{Fix}}
\newtheorem{theorem}{Theorem}
\newtheorem{proposition}{Proposition}
\title {Fixed-Point-Free Involutions on boundary of RACGs}
\author {Aditya De Saha}
\date {}
\begin{document}

\begin{abstract}
    About 20 years ago, Bogdan Nica conjectured that the boundary of any
    word-hyperbolic group admits admits a fixed-point-free involution.
    In this very short article, we prove a variation of the conjecture,
    replacing word-hyperbolic groups with Right-angled coxeter groups. This is
    not a solution to Nica's conjecture, but hopefully the techniques will shed
    some light regarding possible approaches to the problem.
\end{abstract}

\maketitle

\section {Introductions, Right-angled Coxeter Groups}

A \emph{Coxeter System} is a pair \( (\Gamma, V) \) where \( \Gamma \) is a
group, and \( V \) is a \emph{finite} set of generators of \( \Gamma \), such
that the group \( \Gamma \) can be presented as
\begin{equation*}
\Gamma = <v \in V : (ab)^{m(a,b)} = 1 \text{ for all } a,b \in V  >
\end{equation*}
Where \( m(a,b) \in \Z \cup \{ \infty \}  \) with the property:
\begin{enumerate}
  \item \( m(a,b) \geq 2 \text{ for all } a,b \in V \)
  \item \( m(a,b) = m(b,a) \) for all \( a,b \in V \)
  \item \( m(a,a) = 2 \text{ for all } a \in V \)
\end{enumerate}
(Interpret the relation \( (ab)^{\infty} = 1 \) as \say{\( a,b \) has no
relation}).

The group \( \Gamma \) is called a \emph{Coxeter} group. In this paper we will
be primarily interested in \emph{Right-angled} coxeter groups (RACGs), for which \(
m(a,b) = 2 \) or \( \infty \) for all \( a,b \in V \).

Davis (\cite{davis_geometry_2008}, \cite{davis_chapter_2001},
\cite{davis_groups_1983}) showed that for any coxeter system \( (\Gamma, V) \)
there exists a \( CAT(0) \) complex \( \Sigma (\Gamma, V) \) on which \( \Gamma
\) acts via reflections. If the context is obvious, we will denote the complex
by \( \Sigma \), ignoring the underlying coxeter system.

We will briefly describe Davis' construction of the complex \( \Sigma \) for the
coxeter system \( ( \Gamma, V) \). Consider the set \( \FF \) of subsets of \( V
\) which generate a finite subgroup of \( \Gamma \). It is a poset under
inclusion. For every maximal element \( F \in \FF \) define \( P_{F}  \) to be a
point. Assuming that \( P_{F'} \) has been defined for every \( F' \supset F \),
define \( P_{F} \) to be the cone on \( \bigcup _{F' \supset F} P_{F'} \). In
the last step, define \( K = P_{\phi} = \text {Cone} \left( \bigcup _{F \neq
\phi } P_{F} \right)  \). Finally, define \( \Sigma = \Gamma \times K / \sim \),
where \( \sim \) is defined as \( (\gamma_{1}, x_{1}) \sim (\gamma_{2}, x_{2})
\) if and only if \( x_{1} = x_{2} \) and \( \gamma_{1}^{-1} \gamma_{2} \in
\left< v \in V \ | \ x_{1} = x_{2} \in P_v \right> \).

\begin{theorem}
    [\cite{davis_geometry_2008}, Theorem 12.2.1 (credited to Gromov)]
    Suppose \( (\Gamma, V) \) is a right-angled coxeter system. Then \( \Sigma
    \) can be given a piecewise Euclidean CAT(0) structure.
\end{theorem}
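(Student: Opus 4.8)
The plan is to equip \( \Sigma \) with a piecewise Euclidean \emph{cubical} cell structure and then verify Gromov's combinatorial criterion for nonpositive curvature. Recall that a piecewise Euclidean complex assembled from finitely many isometry types of cubes is CAT(0) precisely when it is simply connected and the link of every vertex is a \emph{flag} simplicial complex; this is Gromov's link lemma combined with the Cartan--Hadamard theorem (see the appendices on CAT(0) geometry in \cite{davis_geometry_2008}). The right-angled hypothesis is exactly what forces the cells to be cubes: since \( m(a,b) \in \{2,\infty\} \), a subset \( F \subseteq V \) lies in \( \FF \) if and only if its elements pairwise commute, in which case \( \langle F \rangle \cong (\Z/2)^{|F|} \). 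The chamber \( K \) then subdivides so that each maximal spherical \( F \) contributes an \( |F| \)-dimensional unit Euclidean cube, and the gluing \( \Sigma = \Gamma \times K / \sim \) assembles these into a cube complex on which \( \Gamma \) acts by cubical reflections.

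First I would make the cubical structure explicit: the vertices of \( \Sigma \) are the elements of \( \Gamma \), an edge joins \( \gamma \) to \( \gamma v \) for each \( v \in V \), and each spherical coset \( \gamma \langle F \rangle \) spans a cube of dimension \( |F| \). Because \( V \) is finite the dimension is bounded and there are only finitely many isometry types of cubes, so the resulting path metric is complete and geodesic, and it remains to check the two curvature hypotheses.

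Next I would compute the vertex links. By \( \Gamma \)-homogeneity it suffices to analyse the link \( L \) of the base vertex: its vertices are the generators \( V \), and a set \( \{v_0,\dots,v_k\} \) spans a \( k \)-simplex exactly when \( \{v_0,\dots,v_k\} \in \FF \), i.e. when the \( v_i \) pairwise commute. Thus \( L \) is the clique complex of the graph on \( V \) whose edges record the commuting pairs, and a clique complex is a flag complex by definition: any collection of pairwise-adjacent vertices already bounds a simplex. This is where the right-angled assumption does all the work --- in a general Coxeter system the cells are Coxeter polytopes rather than cubes and the link condition is genuinely delicate --- so here Gromov's criterion for \emph{local} nonpositive curvature holds automatically.

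It then remains to establish the single \emph{global} hypothesis, simple connectivity of \( \Sigma \), which I expect to be the main obstacle precisely because the local condition came for free. For this I would invoke the contractibility of the Davis complex: using the mirror/poset structure of \( K \), one shows that \( \Sigma \) is exhausted by an increasing union of contractible subcomplexes obtained by adjoining chambers across panels, so \( \Sigma \) is contractible and in particular simply connected (\cite{davis_geometry_2008}). Feeding simple connectivity together with the local CAT(0) property and completeness into the Cartan--Hadamard theorem upgrades the local nonpositive curvature to a global one, and \( \Sigma \) is CAT(0).
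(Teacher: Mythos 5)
The paper gives no proof of this theorem at all --- it is quoted verbatim from Davis's book (Theorem 12.2.1) as a known result --- so there is nothing internal to compare against, and the relevant benchmark is the standard argument in the cited source. Your proposal is a correct reconstruction of exactly that argument: the right-angled condition makes the spherical subsets precisely the cliques of the commuting graph, so \( \Sigma \) carries a natural unit-cube structure whose vertex links are clique (hence flag) complexes, Gromov's lemma then gives nonpositive curvature, and contractibility of the Davis complex together with the Cartan--Hadamard theorem upgrades this to a global CAT(0) structure.
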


For a given coxeter system \( (\Gamma, V) \) we define it's \emph{visual
boundary} \( \partial  \Gamma \) to be the visual boundary of its associated
CAT(0) space \( \Sigma(\Gamma, V) \). In other words, it is the set of geodesic
rays in \( \Sigma \) emanating from the unit element \( 1 \in \Gamma \) with the
topology of the uniform convergence on compact sets. We refer the reader to
\cite{dranishnikov_virtual_1997}, \cite{fischer_boundaries_2003} or
\cite{Hosaka2000THEBA} for more details on boundaries of coxeter groups.
Since our group acts on \( \Sigma \) via reflections, it takes geodesics to
geodesics, so the action on \( \Sigma \) gives us an action on the boundary \(
\partial \Gamma \). In other words, the action of \( \Gamma \) on \( \Sigma \)
gives \( \Gamma \) an \( EZ \)-structure. 

Because of the discussion above, we can restrict ourselves to the action of \(
\Gamma \) on \( \Sigma \) instead of \( \partial \Gamma \).


\section {Fixed-Point-Free Involution on the Boundary of RACGs}

Suppose a group \( G \) acts on a space \( X \). We denote the space of fixed
points of \( X \) under the \( G \)-action by \( \Fix (G, X) \).

We prove the following proposition:
\begin{proposition}
  Given a right-angled coxeter group \( \Gamma \), there exists an element \(
  \gamma \in \Gamma \) of order two such that only finitely many points in
  \( \Sigma \) are fixed by \( \gamma \).
\end{proposition}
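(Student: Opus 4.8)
The plan is to exploit the CAT(0) cube complex structure on \( \Sigma \) guaranteed by the theorem of Gromov quoted above. For a right-angled system the spherical subsets in \( \FF \) are exactly the cliques (pairwise-commuting subsets) of \( V \), and \( \Sigma \) is the cube complex whose vertex set is \( \Gamma \), with one cube \( C_{gF} \) for each coset \( g\langle F \rangle \) of a clique \( F \subseteq V \); the vertices of \( C_{gF} \) are \( \{ gw : w \in \langle F \rangle \} \), and the identification \( \langle F \rangle \cong (\Z/2)^{F} \) realizes \( C_{gF} \) as a Euclidean cube \( [0,1]^{F} \). I would fix a maximal clique \( F \) (a maximal spherical subset, which exists because \( V \) is finite and nonempty) and set \( \gamma = \prod_{v \in F} v \). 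Since the elements of \( F \) pairwise commute and are involutions, \( \gamma^{2} = 1 \) and \( \gamma \neq 1 \), so \( \gamma \) is the desired order-two element. Let \( C = C_{F} \) be the cube of the coset \( \langle F \rangle \) and let \( c \) be its barycenter.

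Next I would identify the local action of \( \gamma \). Because \( \gamma \in \langle F \rangle \), left multiplication by \( \gamma \) preserves \( C \), and in the coordinates \( (\Z/2)^{F} \) it sends a vertex \( w \) to \( \gamma w = \mathbf{1} + w \), i.e.\ it flips every coordinate. The isometry of \( [0,1]^{F} \) extending this vertex map is the Euclidean central symmetry \( t \mapsto \mathbf{1} - t \), whose unique fixed point in \( C \) is the barycenter \( c \); in particular \( \gamma \) fixes \( c \) and acts on the boundary \( \partial C \) of the cube without fixed points.

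The key geometric step is to upgrade \say{free on \( \partial C \)} to \say{unique fixed point in all of \( \Sigma \)}. Here I would use two facts. First, the fixed-point set of any isometry of a CAT(0) space is convex, since two fixed points are joined by a unique geodesic which the isometry must then fix pointwise. Second, because \( F \) is a \emph{maximal} clique, \( C \) is a maximal cube, so no other cube of \( \Sigma \) contains the interior point \( c \), and therefore \( \text{Lk}(c, \Sigma) = \partial C \). Combining these, suppose \( q \neq c \) were also fixed by \( \gamma \); then the geodesic \( [c,q] \) would be fixed pointwise, so its initial direction would be a point of \( \text{Lk}(c,\Sigma) = \partial C \) fixed by the induced action of \( \gamma \), contradicting that the central symmetry is free on \( \partial C \). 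Hence \( \Fix(\gamma, \Sigma) = \{ c \} \), which is in particular finite.

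I expect the only genuinely delicate point to be the identification \( \text{Lk}(c, \Sigma) = \partial C \), namely justifying that the barycenter of a top-dimensional cube is met by no other cube of the complex. This follows because cubes meet only along common faces, so distinct cubes have disjoint interiors, and maximality of \( F \) prevents \( C \) from being a proper face of any larger cube; nonetheless it is the step that must be phrased carefully, together with the verification that \( \gamma \) acts on \( C \) as the Euclidean central symmetry and not merely as a permutation of its vertices.
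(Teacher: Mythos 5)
Your proof is correct, and it rests on the same core strategy as the paper's: the same element \( \gamma \) (the product of a maximal spherical subset; your maximal clique \( F \) is the paper's \( S \)), the same unique fixed point (the barycenter \( c \) of the cube \( C_F \) is exactly the paper's \( x_0 = \Fix(\Gamma_S, \Sigma) \)), the same local picture (\( \gamma \) acts as a point symmetry near that fixed point), and the same local-to-global CAT(0) step. Where you genuinely diverge is in the supporting machinery. The paper outsources the local analysis to Davis's Lemmas 7.5.1 and 7.5.4 --- maximality of \( S \) forces \( \Fix(\Gamma_S, \Sigma) \) to be a single point \( x_0 \), near which the action is \( \Gamma_S \)-equivariantly the canonical representation on \( \R^{S} \), where \( \gamma \) is the antipodal map --- and then dispatches the globalization in a single unproved sentence. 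You instead reconstruct the local model directly from the cubical structure of the Davis complex (maximality of \( F \) gives \( \text{Lk}(c, \Sigma) = \partial C \), and \( \gamma \) restricted to \( C \) is the central symmetry), and you actually prove the globalization: convexity of fixed-point sets of isometries of CAT(0) spaces, plus freeness of the antipodal action on the link, rules out any second fixed point. Your version is thus more self-contained, and it supplies precisely the justification that the paper's final sentence (\say{Since \( \Sigma \) is CAT(0) and \( \Gamma \) acts via reflections, the local fixed point \( x_0 \) is the only global fixed point}) leaves implicit; the cost is having to verify the cube-complex facts you flag yourself --- that distinct cubes meet only along common faces, so the carrier of \( c \) being the maximal cube \( C \) means no other cube contains \( c \), and that an isometry of \( \Sigma \) preserving a cube restricts to the affine (here central) symmetry extending its vertex permutation --- both of which are standard and which you justify correctly.
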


\begin{proof} [Proof of Proposition.]

  For a subset \( S \subset \Gamma \), we say that \( S \) is \emph{spherical}
  if all elements of \( S \) commute with each other. Pick a spherical subset
  \( S \subset V \) of maximal cardinality. Let \( n = |S| \). Let \( S = \{
  a_1, a_2, \cdots , a_n \}  \), and let \( \Gamma_S \) be the subgroup of
  \( \Gamma \) generated by \( S \). The elements \( a_1, \cdots, a_n \) act
  on \( \Sigma \) by reflections, fixing the subspaces \( \Sigma_{a_1},
  \cdots, \Sigma_{a_n} \) respectively. These hyperplanes intersect with each
  other, since all these elements commute.

  Define our element \( \gamma \) as the product
  \[ \gamma = a_1 a_2 \cdots a_n .\]
  \( \gamma \) is of order two because
  \[ \gamma ^{2} =  (a_1 \cdots a_n) (a_1 \cdots a_n) = a_1^{2} \cdots a_n^{2} 
    = 1 .\]
    By lemma 7.5.1 in \cite{davis_geometry_2008}, since \( S \) is of maximal
    cardinality, \( \Fix (\Gamma_S, \Sigma) \) is a point, let it be \( x_0 \)
    in \( \Sigma \). Again by lemma \( 7.5.4 \) in the same book
    \cite{davis_geometry_2008}, a neighborhood of \( x_0 \) is \( \Gamma_S
    \)-equivariantly a neighborhood of the origin in the canonical
    representation of \( \Gamma_S \) on \( \R^{S} \). But the action of \(
    \gamma = a_1 a_2 \cdots a_n \) corresponds to the antipodal map \( z \to -z
    \) in \( \R^{S} \), so it fixes only the origin \( x_0 \). Since \( \Sigma
    \) is CAT(0) and \( \Gamma \) acts via reflections, the local fixed point \(
    x_0\) is the only global fixed point.  

\end{proof}

Since the action of \( \gamma \) on \( \Sigma \) gives rise to an action of \(
\gamma \) on \( \partial \Sigma \), we obtain as a corollary, the main result of
the paper:

\begin{theorem}
  For any right-angled coxeter group \( \Gamma \), there exists a
  fixed-point-free involution \( \varphi \) on \( \partial \Gamma \).
\end{theorem}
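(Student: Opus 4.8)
The plan is to take $\varphi$ to be precisely the homeomorphism of $\partial \Gamma = \partial \Sigma$ induced by the isometry $\gamma = a_1 a_2 \cdots a_n$ produced in the Proposition. Since $\gamma$ is an isometry of the CAT(0) space $\Sigma$, it carries geodesic rays to geodesic rays and preserves the asymptotic equivalence relation on rays, hence descends to a homeomorphism $\varphi$ of the visual boundary. Because $\gamma^2 = 1$, functoriality of the boundary construction gives $\varphi^2 = \mathrm{id}$, so $\varphi$ is an involution. It therefore remains only to verify that $\varphi$ has no fixed point on $\partial \Gamma$; this is the entire content of the theorem, and it is where the Proposition (that $\gamma$ fixes only finitely many points of $\Sigma$, in fact only $x_0$) will be used.

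First I would record the standard fact that, since $\Sigma$ is a complete (indeed proper) CAT(0) space, for every point $p \in \Sigma$ and every $\xi \in \partial \Sigma$ there is a \emph{unique} geodesic ray emanating from $p$ and representing $\xi$ (a standard fact for complete CAT(0) spaces; see e.g. Bridson--Haefliger). Now suppose toward a contradiction that $\varphi$ fixes some boundary point $\xi$, i.e. $\gamma(\xi) = \xi$. Let $\rho \colon [0, \infty) \to \Sigma$ be the unique geodesic ray with $\rho(0) = x_0$ representing $\xi$. Since $\gamma$ is an isometry fixing $x_0$, the ray $\gamma \circ \rho$ also starts at $x_0$ and represents $\gamma(\xi) = \xi$; by uniqueness we get $\gamma \circ \rho = \rho$. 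Finally, because $\gamma$ fixes $x_0$ and preserves arc length, $d(x_0, \gamma(\rho(t))) = d(x_0, \rho(t)) = t$ for all $t$, and $\gamma(\rho(t))$ lies on $\rho$; hence $\gamma(\rho(t)) = \rho(t)$ for all $t \geq 0$. Thus $\gamma$ fixes the entire ray $\rho$ pointwise, contradicting the Proposition, which asserts that the fixed set of $\gamma$ in $\Sigma$ is finite. Hence no such $\xi$ exists and $\varphi$ is fixed-point-free.

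The conceptual heart of the argument is the passage from \say{finitely many fixed points in \( \Sigma \)} to \say{no fixed points on \( \partial \Sigma \)}: a boundary fixed point would force a whole fixed geodesic ray through the interior fixed point $x_0$, which is impossible. I expect the main technical care to lie not in this contradiction itself but in the supporting CAT(0) input, namely confirming that the Davis complex $\Sigma$ of a RACG with finite generating set is complete and proper (so that the existence and uniqueness of geodesic rays from a basepoint, and the continuity of the induced boundary map, genuinely apply), and that $\gamma$ acts by a genuine isometry of $\Sigma$ rather than merely a self-homeomorphism. One degenerate case deserves a remark: if $\Gamma$ is finite then $\Sigma$ is bounded and $\partial \Gamma = \emptyset$, for which the statement is vacuous; the content is for infinite $\Gamma$, where $\partial \Gamma \neq \emptyset$ guarantees that the fixed-point-free $\varphi$ is a genuine nontrivial involution.
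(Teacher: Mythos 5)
Your proposal is correct and takes essentially the same route as the paper: the involution is the boundary map induced by the $\gamma$ from the Proposition, it squares to the identity because $\gamma^2 = 1$, and a fixed boundary point is ruled out because it would force a $\gamma$-fixed geodesic ray through the fixed point $x_0$, contradicting finiteness of the fixed set in $\Sigma$. If anything, your write-up is more careful than the paper's own proof, which passes from \say{finitely many fixed points} to \say{no fixed boundary points} without spelling out the uniqueness-of-rays-from-$x_0$ argument that you supply.
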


\begin{proof}
  The action of \( \Gamma \) on \( \Sigma \) is via reflections, so it takes
  geodesics to geodesics. Therefore the \( \Gamma \)-action on \( \Sigma \) gives an action
  of \( \Gamma \) on \( \partial \Sigma \). By the proposition, there is an
  element \( \gamma \in \Gamma \) such that \( \gamma \) has only finitely many
  fixed points in \( \Sigma \). Since there are only finitely many fixed points,
  therefore no geodesics are fixed by \( \gamma \), and hence the induced map \(
  \varphi \) on \( \partial \Sigma \) has no fixed points. Also since \( \gamma
  \) has order two, \( \varphi \) is an involution.
\end{proof}

\section{Acknowledgement}

The author would like to thank his advisor Alexander Dranishnikov for many
helpful conversations regarding Coxeter groups and help with theorem 2.

\printbibliography
\end{document}